\title{Closed-form expansions for the bivariate chromatic polynomial of paths and cycles}
\author{Klaus Dohmen\\ Hochschule Mittweida\\ Technikumplatz 17\\ 09648 Mittweida, Germany}
\newtheorem{theorem}{Theorem}
\newtheorem{lemma}[theorem]{Lemma}
\theoremstyle{remark}
\newtheorem{remark}[theorem]{Remark}
\newcommand{\xy}{\sqrt{(x+1)^2-4y}}
\begin{document}

\maketitle

\begin{abstract}
\textit{Abstract}. 
We establish closed-form expansions for 
the number of colorings of a path or cycle on $n$ vertices with colors from 
the set $\{1,\dots,x\}$ 
such that adjacent vertices are colored differently
or with colors from $\{y+1,\dots,x\}$.
\par
\medskip
\textit{Keywords}.
graph, chromatic polynomial, coloring, path, cycle, recurrence, closed-form
\end{abstract}

\section{Introduction}

Let $G=(V,E)$ be a finite, simple graph having vertex-set $V$ and edge-set $E$.
For any $x\in\mathbb{N}$ and $y = 0,\dots,x$ we use
$P(G,x,y)$ to denote the number of vertex-colorings $f:V\rightarrow \{1,\dots,x\}$
such that for any edge $\{v,w\}\in E$,
either $f(v)\neq f(w)$ or $f(v)=f(w)>y$.
This function has been introduced in \cite{DPT:2003}
and is known to be a polynomial in the indeterminates $x$ and $y$,
which is now referred to as the \emph{bivariate chromatic polynomial}~\cite{AGM:2010}.
This polynomial generalizes the chromatic polynomial (in the particular case where $x=y$),
the independence polynomial, and the matching polynomial.
\par
Let $P_n$ denote the path resp\@. cycle on $n$ vertices.
By considering the lattice of forbidden colorings, it is proved in \cite{DPT:2003} that
\begin{align}
\label{pnsum}
 P(P_n,x,y) & = \sum_{0 < i + 2j \le n} (-1)^{n-i-j} {i+j \choose i} {n-i-j-1\choose n-i-2j} x^i y^j,\\
\label{cnsum}
 P(C_n,x,y) & = (-1)^n y \,+\, n \!\sum_{0 < i + 2j \le n} \frac{(-1)^{n-i-j}}{i+j} {i+j \choose i} {n-i-j-1\choose n-i-2j} x^i y^j.
\end{align}
In this short note, we give closed-form expansions for the sums in (\ref{pnsum}) and (\ref{cnsum}).
Until now, such a closed-form expansion is only known for stars \cite{DPT:2003}.

\section{The path}

Our first result generalizes the fact that the chromatic polynomial
of any path on $n$ vertices is $x(x-1)^{n-1}$.

\begin{theorem}
\label{mainthm}
For any path $P_n$ on $n$ vertices,
any $x\in \mathbb{N}$ and $y=0,\dots,x$,
except for $x=y=1$, we have
\begin{multline*}
P(P_n,x,y) = \frac{\xy-x-1}{2\xy}\cdot \left( \frac{x-1-\xy}{2} \right)^n \\
 + \, \frac{\xy+x+1}{2\xy}\cdot \left( \frac{x-1+\xy}{2} \right)^n\, .
\end{multline*}
\end{theorem}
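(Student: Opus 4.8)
The plan is to reduce the claim to solving a second-order linear recurrence with coefficients independent of $n$, and then to write down its solution explicitly. Setting $a_n:=P(P_n,x,y)$, the first step is to prove
\[
a_n \;=\; (x-1)\,a_{n-1} + (x-y)\,a_{n-2}\qquad(n\ge 2),
\]
with $a_0=1$ and $a_1=x$. To obtain it, split the valid colorings of $P_n$ according to the color of the last vertex: let $b_n$ count those in which the last vertex receives a color in $\{y+1,\dots,x\}$ and let $c_n=a_n-b_n$ be the rest. Appending a new vertex at the end of $P_{n-1}$ and checking which colors are admissible, one sees that every color greater than $y$ is always admissible for the new vertex, so $b_n=(x-y)\,a_{n-1}$, while a color at most $y$ is admissible exactly when it differs from the previous vertex's color, so $c_n=y\,b_{n-1}+(y-1)\,c_{n-1}$. (Equivalently, $(b_n,c_n)$ evolves under a fixed $2\times2$ transfer matrix.) Eliminating $b$ and $c$ yields the displayed recurrence; a separate direct count gives $a_1=x$ and $a_2=x^2-y$, which matches the convention $a_0=1$. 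As an alternative to this transfer-type argument, the recurrence could instead be verified directly from the double sum \eqref{pnsum} by a binomial-coefficient manipulation, but the argument sketched above is shorter.

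The second step is to solve the recurrence. Its characteristic equation is $t^2-(x-1)t-(x-y)=0$, whose discriminant simplifies to $(x-1)^2+4(x-y)=(x+1)^2-4y$, so the roots are
\[
t_{\pm}=\frac{x-1\pm\xy}{2}.
\]
For $x\in\mathbb{N}$ and $0\le y\le x$ one has $(x+1)^2-4y\ge (x-1)^2\ge 0$, with equality throughout precisely when $x=y=1$; hence under the hypotheses of the theorem $\xy\neq 0$ and the two roots are distinct, so $a_n=A\,t_+^{\,n}+B\,t_-^{\,n}$ for suitable $A,B$. Imposing $A+B=a_0=1$ and $At_+ + Bt_-=a_1=x$ and solving this $2\times2$ system gives $A=\dfrac{\xy+x+1}{2\,\xy}$ and $B=\dfrac{\xy-x-1}{2\,\xy}$, which is exactly the asserted closed form.

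I expect the only real obstacle to be the first step. The point is that $P_n$ does not decompose into $P_{n-1}$ in a way that involves $a_{n-1}$ alone: whether a monochromatic edge is permitted at the end depends on whether the last vertex carries a color greater than $y$ or at most $y$, so one is forced to refine the count into the two classes $b_n$ and $c_n$ (equivalently, to track a $2\times2$ transfer matrix) before a scalar recurrence in the $a_n$ emerges. Once the recurrence and initial values are secured, the rest — recognizing the discriminant as $(x+1)^2-4y$, observing that it vanishes only at the excluded point $x=y=1$, and solving the linear system for $A$ and $B$ — is routine.
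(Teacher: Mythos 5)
Your proof is correct, and it reaches the same two pillars as the paper's proof --- the recurrence $P(P_n,x,y)=(x-1)P(P_{n-1},x,y)+(x-y)P(P_{n-2},x,y)$ with $P(P_0,x,y)=1$, $P(P_1,x,y)=x$, and its explicit solution via the characteristic roots $\tfrac{x-1\pm\xy}{2}$ --- but you arrive at the recurrence by a different route. The paper obtains it in one line from the edge-elimination identity of Averbouch, Godlin and Makowsky (Lemma \ref{mainlemma}), applied to an end edge of the path: deletion gives $x\,P(P_{n-1},x,y)$, contraction gives $P(P_{n-1},x,y)$, and extraction gives $(x-y)P(P_{n-2},x,y)$. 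You instead give a self-contained combinatorial derivation, refining the count by whether the last vertex's color exceeds $y$ and eliminating the two auxiliary sequences $b_n$, $c_n$ of the resulting transfer-matrix system; this avoids citing the external lemma and makes transparent \emph{why} a second-order scalar recurrence appears, at the cost of a slightly longer argument. Your verification $b_n=(x-y)a_{n-1}$ and $c_n=yb_{n-1}+(y-1)c_{n-1}$ is sound (including the degenerate cases $y=0$ and $y=x$), and the elimination does give $(x-1)a_{n-1}+(x-y)a_{n-2}$ already for $n\ge 2$. A further small difference: the paper disposes of $x=1$, $y=0$ by inspection and then uses $x>1$ to get a positive discriminant, whereas you observe directly that $(x+1)^2-4y\ge(x-1)^2\ge 0$ with equality only at the excluded point $x=y=1$, which handles all admissible $(x,y)$ uniformly --- a marginally cleaner treatment. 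The linear-algebra step determining the coefficients $A=\frac{\xy+x+1}{2\xy}$ and $B=\frac{\xy-x-1}{2\xy}$ agrees with the paper's (\ref{constants}).
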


The proof of Theorem \ref{mainthm} is based on a recent decomposition formula 
for $P(G,x,y)$ by \textsc{Averbouch} et al\@. \cite{AGM:2010}.
This formula involves three kinds of edge elimination:
\begin{labeling}{}
\item[\qquad $G_{-e}$:] The graph obtained from $G$ by removing the edge $e$.
\item[\qquad $G_{/e}$:] The graph obtained from $G$ by identifying the end points of $e$,
and then,
in the resulting multigraph, replacing each pair of parallel edges by a single edge.
\item[\qquad $G_{\dagger e}$:] The graph obtained from $G$ by removing $e$ 
and all incident vertices.
\end{labeling}
We use $\bullet$ resp\@. $\varnothing$ to denote the simple graph
consisting of only one vertex, respectively the empty graph (which has no vertex).

\begin{lemma}[\cite{AGM:2010}]
\label{mainlemma}
For any finite simple graph $G$ and any edge $e$ of $G$,
the bivariate chromatic polynomial $P(G,x,y)$ satisfies the recurrence relation
\begin{align*}
P(G,x,y) & = P(G_{-e},x,y) - P(G_{/e},x,y) + (x-y) \cdot P(G_{\dagger e},x,y) 
\end{align*}
with initial conditions $P(\bullet,x,y)=x$ and $P(\varnothing,x,y)=1$.
\end{lemma}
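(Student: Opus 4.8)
The plan is to prove the recurrence by a direct combinatorial counting argument, classifying the colorings counted by $P(G,x,y)$ according to their behavior on a fixed edge $e=\{u,v\}$. Throughout, I would call a color in $\{1,\dots,y\}$ a \emph{proper} color and a color in $\{y+1,\dots,x\}$ a \emph{free} color, the point being that the edge constraint ``$f(u)\neq f(v)$ or $f(u)=f(v)>y$'' is violated exactly when $u$ and $v$ share a proper color.

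First I would establish the deletion step. A coloring $f:V\to\{1,\dots,x\}$ is admissible for $G$ iff it is admissible for every edge; hence the colorings admissible for $G$ are precisely those admissible for $G_{-e}$ that additionally respect $e$. The colorings admissible for $G_{-e}$ but inadmissible for $G$ are exactly those with $f(u)=f(v)\le y$, so
\[
P(G,x,y)=P(G_{-e},x,y)-B,
\]
where $B$ denotes the number of colorings admissible for $G_{-e}$ with $f(u)=f(v)\le y$.

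Next I would identify $B$ through contraction. The colorings admissible for $G_{/e}$ correspond bijectively to the colorings of $V$ with $f(u)=f(v)$ that respect every edge of $G$ except $e$; collapsing the parallel edges produced by the identification changes no constraint, since a repeated adjacency imposes the same condition as a single one. Splitting these colorings according to whether the common value $f(u)=f(v)$ is proper or free gives
\[
P(G_{/e},x,y)=B+C,
\]
where $C$ counts the colorings admissible for $G_{-e}$ with $f(u)=f(v)>y$. Combining with the previous display yields $P(G,x,y)=P(G_{-e},x,y)-P(G_{/e},x,y)+C$, so it remains to show $C=(x-y)\cdot P(G_{\dagger e},x,y)$.

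The key observation, and the step I expect to carry the real content, is that once $f(u)=f(v)$ is a \emph{free} color, every edge incident to $u$ or $v$ is automatically satisfied: for an edge $\{u,z\}$ with $f(u)>y$, either $f(z)\neq f(u)$ or $f(z)=f(u)>y$, and in both cases the constraint holds irrespective of $f(z)$. Thus the endpoints decouple from the rest of the graph, and a coloring counted by $C$ is determined by an arbitrary free color for the merged value $f(u)=f(v)$ — of which there are $x-y$ — together with an admissible coloring of the induced subgraph on $V\setminus\{u,v\}$, which is exactly $G_{\dagger e}$. Since these choices are independent, $C=(x-y)\cdot P(G_{\dagger e},x,y)$, establishing the recurrence. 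Finally I would record the initial conditions directly: a single vertex admits $x$ colorings and the empty graph admits the unique empty coloring, giving $P(\bullet,x,y)=x$ and $P(\varnothing,x,y)=1$. The main obstacle is not computational but one of rigor: making the parallel-edge identification in $G_{/e}$ and the ``automatic satisfaction'' argument for free colors airtight; the rest is bookkeeping.
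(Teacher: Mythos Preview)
Your argument is correct. The paper does not supply its own proof of this lemma: it is quoted as a known result from \cite{AGM:2010} and used as a black box. Your direct counting proof---splitting colorings of $G_{-e}$ by the behaviour on $e$, identifying the ``bad'' colorings with $f(u)=f(v)\le y$ via contraction, and then showing that the residual term with $f(u)=f(v)>y$ factors as $(x-y)\,P(G_{\dagger e},x,y)$ because a free colour on an endpoint automatically satisfies every incident edge---is a clean self-contained derivation of the recurrence. The only delicate points you already flagged (that collapsing parallel edges in $G_{/e}$ does not alter the constraint set, and that a free endpoint colour decouples $u,v$ from the rest of the graph) are handled correctly.
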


We proceed with our proof of Theorem \ref{mainthm}.

\begin{proof}[Proof of Theorem \ref{mainthm}]
Obviously,
the statement holds if $x=1$ and $y=0$.
Hence, we may assume that $x>1$.
By choosing an end edge of $G$,
Lemma \ref{mainlemma} yields the recurrence 
\begin{equation}
\label{recurrence}
 P(P_n,x,y) = (x-1) P(P_{n-1},x,y) + (x-y)P(P_{n-2},x,y) \quad (n\ge 3),
\end{equation}
 with initial conditions 
\begin{equation}
\label{initial}
 P(P_0,x,y) = 1, \quad P(P_1,x,y)=x\, .
\end{equation}
Since this is a homogeneous linear recurrence of degree two 
with constant coefficients, its solution is of the form
\begin{equation}
\label{aneq}
 a_n = c_1r_1^n + c_2r_2^n
\end{equation}
where 
\begin{equation}
\label{solution}
r_{1/2} =  \frac{x-1}{2} \pm \frac{1}{2} \sqrt{(x+1)^2-4y}
\end{equation}
are the roots of
the characteristic equation
\begin{equation*}
 r^2 - (x-1) r - x + y = 0 .
\end{equation*}
Note that, since $x>1$ and $y\le x$, 
the discriminant $(x+1)^2-4y$ of the characteristic equation is positive,
so there are exactly two different solutions.
\par
From the initial conditions (\ref{initial}) we obtain 
the following expressions for the coefficients in (\ref{aneq}):
\begin{equation}
 \label{constants}
 c_1 = \frac{\sqrt{(x+1)^2-4y}-x-1}{2\sqrt{(x+1)^2-4y}}, \quad c_2 = \frac{\sqrt{(x+1)^2-4y}+x+1}{2\sqrt{(x+1)^2-4y}}.
\end{equation}
Now, by putting the expressions from (\ref{solution}) and (\ref{constants}) into (\ref{aneq})
the statement of the theorem is proved.
Alternatively, proceed by induction on $n$, using (\ref{recurrence}) and (\ref{initial}).
\end{proof}

\begin{remark}
For $x=y$ the formula in Theorem \ref{mainthm} specializes 
to the chromatic polynomial of $P_n$,
which coincides with the chromatic polynomial of any tree on $n$ vertices.
Note, however, that the formula in Theorem \ref{mainthm} does \emph{not} extent to trees.
As an example, for the star $S_4$ on four vertices one easily finds that
$P(S_4,x,y)=3xy-3x^2y+x^4-y$, 
whereas for the path on four vertices, we have
$P(P_4,x,y) = 2xy-3x^2y+x^4-y+y^2$.
\end{remark}

\section{The cycle}

Based on our preceding result on paths, 
we subsequently generalize the fact that the chromatic polynomial
of any cycle on $n$ vertices is $(x-1)^n+(-1)^n(x-1)$.

\begin{theorem}
For any cycle $C_n$ on $n\ge 3$ vertices,
any $x\in \mathbb{N}$ and $y=0,\dots,x$,
\begin{multline}
\label{cyclepoly}
P(C_n,x,y) = {\left( \frac{x - 1 - \xy }{2} \right)}^{n} \\
+ \, {\left( \frac{x - 1 + \xy}{2} \right)}^{n}
+ \, \left(-1\right)^{n} (y - 1) .
\end{multline}
\end{theorem}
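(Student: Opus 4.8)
The plan is to combine the decomposition formula of Lemma~\ref{mainlemma} with the closed form for paths from Theorem~\ref{mainthm}, and then run an induction on $n$. First I would dispose of the one parameter value that Theorem~\ref{mainthm} excludes: if $x=y=1$, then every vertex must receive the only available color and every edge becomes monochromatic with a color $\le y$, so $P(C_n,x,y)=0$ for all $n\ge 3$; this agrees with \eqref{cyclepoly} because there $\xy=0$ and $y-1=0$. From then on I may assume $(x,y)\neq(1,1)$, so that the path formula of Theorem~\ref{mainthm} is available.

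Next I would set up the recurrence. Choosing any edge $e$ of $C_n$, one checks that $(C_n)_{-e}=P_n$, that $(C_n)_{\dagger e}=P_{n-2}$, and that $(C_n)_{/e}=C_{n-1}$ for $n\ge 4$, while $(C_3)_{/e}=P_2$ (a single edge, since for $n=3$ the two endpoints of $e$ share the third vertex as a common neighbour, so the arising pair of parallel edges collapses). Hence Lemma~\ref{mainlemma} yields
\begin{equation}
\label{cycrec}
 P(C_n,x,y) = P(P_n,x,y) - P(C_{n-1},x,y) + (x-y)\,P(P_{n-2},x,y)\qquad(n\ge 4),
\end{equation}
and this is what drives the induction once the base case $n=3$ is settled. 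For the base case I would either invoke \eqref{cnsum} directly, or apply Lemma~\ref{mainlemma} once more (here $(C_3)_{/e}=P_2$ and $(C_3)_{\dagger e}=\bullet$) together with $P(P_2,x,y)=x^2-y$ and $P(P_3,x,y)=x^3-2xy+y$, obtaining $P(C_3,x,y)=x^3-3xy+2y$; expanding the right-hand side of \eqref{cyclepoly} at $n=3$ via $r_1+r_2=x-1$ and $r_1r_2=y-x$, with $r_{1,2}$ the roots from \eqref{solution}, gives the same value.

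For the inductive step, write $Q_n$ for the right-hand side of \eqref{cyclepoly}, i.e.\ $Q_n=r_1^n+r_2^n+(-1)^n(y-1)$. Since the contributions $(-1)^n(y-1)$ and $(-1)^{n-1}(y-1)$ cancel in $Q_n+Q_{n-1}$, checking that $Q$ obeys \eqref{cycrec} amounts to showing
\[
 r_1^{n-1}(r_1+1)+r_2^{n-1}(r_2+1) \;=\; P(P_n,x,y)+(x-y)\,P(P_{n-2},x,y).
\]
Substituting $P(P_m,x,y)=c_1r_1^m+c_2r_2^m$ from Theorem~\ref{mainthm} and using $r_i^2=(x-1)r_i+(x-y)$, the right-hand side becomes $\sum_i c_i r_i^{n-2}\bigl(r_i^2+(x-y)\bigr)$, so it suffices to verify the per-term identity $c_i\bigl(r_i^2+(x-y)\bigr)=r_i^2+r_i$ for $i=1,2$. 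This follows at once from two observations: $x-y=-r_1r_2$, and the compact form $c_1=(r_1+1)/(r_1-r_2)$, $c_2=(r_2+1)/(r_2-r_1)$, which is just a rewriting of \eqref{constants} via $\xy=r_2-r_1$ and $\tfrac{x+1\pm\xy}{2}=r_{2/1}+1$; indeed then $c_i\bigl(r_i^2-r_1r_2\bigr)=c_i r_i(r_i-r_j)=r_i(r_i+1)$ for $\{i,j\}=\{1,2\}$.

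The routine parts are the three graph identifications feeding \eqref{cycrec} and the expansion of the $n=3$ base case; the only step that needs a moment's thought is recognizing the compact form of the path coefficients $c_i$, after which the inductive step collapses immediately. As an entirely self-contained alternative one may instead observe that $P(C_n,x,y)=\operatorname{tr}(M^n)$ and $P(P_n,x,y)=\mathbf{1}^{\top}M^{n-1}\mathbf{1}$ for the symmetric transfer matrix $M=J-D$, where $J$ is all-ones and $D=\operatorname{diag}(1,\dots,1,0,\dots,0)$ has $y$ ones; its eigenvalues are $r_1,r_2$ together with $-1$ (multiplicity $y-1$) and $0$ (multiplicity $x-y-1$), so reading off $\operatorname{tr}(M^n)$ for $n\ge 1$ produces \eqref{cyclepoly} directly, with the degenerate cases $y=0$ and $x=y$ checked by hand.
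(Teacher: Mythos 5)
Your proposal is correct, and it reuses the paper's two main ingredients (Lemma \ref{mainlemma} applied to an edge of $C_n$, giving exactly the recurrence (\ref{recurrence_for_cn}) with initial value $P(C_3,x,y)=x^3-3xy+2y$, and the closed form for paths from Theorem \ref{mainthm}), but it finishes differently. The paper unrolls the recurrence into an alternating sum, evaluates two geometric series, and then relies on \emph{Sage} to simplify the resulting expression to (\ref{cyclepoly}); you instead verify by induction that the claimed closed form satisfies the recurrence, which reduces to the identity $c_i\bigl(r_i^2+(x-y)\bigr)=r_i(r_i+1)$, and this collapses once you observe $x-y=-r_1r_2$ and rewrite the path coefficients as $c_i=(r_i+1)/(r_i-r_j)$. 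That observation is correct (it is just (\ref{constants}) expressed through $\sqrt{(x+1)^2-4y}=r_2-r_1$), your base case $n=3$ checks out, and your separate treatment of $x=y=1$ properly covers the one case excluded from Theorem \ref{mainthm}, so the argument is complete and, unlike the paper's, entirely free of computer algebra. Your transfer-matrix alternative is a genuinely different and even more economical route: the compatibility matrix $M=J-D$ has eigenvalues $r_1,r_2$, $-1$ (multiplicity $y-1$) and $0$ (multiplicity $x-y-1$), so $P(C_n,x,y)=\operatorname{tr}(M^n)$ yields (\ref{cyclepoly}) in one line and simultaneously re-derives Theorem \ref{mainthm}; its only cost is the bookkeeping in the degenerate cases $y=0$ and $y=x$, which, as you note, can be checked directly (and in fact the trace formula still holds there).
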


\begin{proof}
Since $P(C_n,1,1)=0$, we may assume that not both $x$ and $y$ are equal to~1.
By choosing an edge of $G$, Lemma \ref{mainlemma} yields the recurrence
\begin{equation}
\label{recurrence_for_cn}
 P(C_n,x,y) + P(C_{n-1},x,y) = P(P_n,x,y)+(x-y)P(P_{n-2},x,y) \quad (n\ge 4),
\end{equation}
with initial condition
\begin{equation*}
P(C_3,x,y) = x^3 -3xy+2y.
\end{equation*}
Iterating (\ref{recurrence_for_cn}) we obtain
\begin{equation}
\label{elf}
P(C_n,x,y) = (-1)^n \left( \sum_{i=4}^n (-1)^i \Big( P(P_i,x,y) + (x-y) P(P_{i-2},x,y) \Big) - P(C_3,x,y) \right).
\end{equation}
Since not both $x$ and $y$ are equal to 1,
we may apply Theorem \ref{mainthm} and write
\begin{equation}
\label{zwoelf}
P(P_i,x,y) + (x-y) P(P_{i-2},x,y) = cr^i +ds^i 
\end{equation}
where 
\begin{gather*}
r = \frac{x-1-\xy}{2}, \quad c = \frac{\xy-x-1}{2\xy}\left( 1+ \frac{x-y}{r^2} \right), \\
s = \frac{x-1+\xy}{2}, \quad d = \frac{\xy+x+1}{2\xy}\left( 1+ \frac{x-y}{s^2} \right).
\end{gather*}
From (\ref{zwoelf}) it follows that
\begin{multline*}
\sum_{i=4}^n (-1)^i \Big( P(P_i,x,y) + (x-y) P(P_{i-2},x,y) \Big) \\
\begin{aligned}
= & \,\,\, c\left( \sum_{i=0}^n (-r)^i + r^3-r^2+r-1 \right) \,+\, d\left( \sum_{i=0}^n (-s)^i +s^3-s^2+s-1 \right) \\
= & \,\,\, c\left( \frac{1-(-r)^{n+1}}{1-(-r)} + (r-1)^3 \right) \,+\, d\left( \frac{1-(-s)^{n+1}}{1-(-s)} + (s-1)^3 \right) .
\end{aligned}
\end{multline*}
Putting this into (\ref{elf}) we obtain 
\begin{equation}
P(C_n,x,y) = c \, \frac{r^{n+1}+(-1)^n r^4}{r+1} \,+\, d \,\frac{s^{n+1}+(-1)^n s^4}{s+1} + (-1)^{n-1} P(C_3,x,y) , 
\end{equation}
which \emph{Sage} \cite{Sage} simplifies to (\ref{cyclepoly}).
\end{proof}

\end{document}